\newtheorem{lemme}{Lemme}
\newtheorem{theo}{Th\'eor\`eme}
\newtheorem{proposition}{Proposition}
\newtheorem{corollaire}{Corollaire}
\theoremstyle{definition}
\newtheorem{remarque}{Remarque}
\newtheorem{notation}{Notation}
\newtheorem{defi}{D\'efinition}
\def\F{\mathbb{F}}
\def\N{\mathbb{N}}
\begin{document}
\title[] 
{Sur le d\'eveloppement en fraction continue d'une g\'en\'eralisation de la cubique de Baum et Sweet }

\author{Alina Firicel}

\address{Universit\'{e} de Lyon\\
Universit\'{e} Lyon 1 \\
 Institut Camille Jordan\\
  UMR 5208 du CNRS \\
   43, boulevard du 11 novembre 1918 \\ 
   F-69622 Villeurbanne Cedex, France\\
\texttt{firicel@math.univ-lyon1.fr}}

\maketitle

\section{Introduction}
\par Il y a une trentaine d'ann\'ees, les travaux de Baum et Sweet
\cite{baumsweet} ont ouvert un nouveau domaine de recherche sur l'approximation
diophantienne dans les corps de s\'eries formelles \`a coefficients dans un corps fini,
par le biais du d\'eveloppement en fraction continue. Ces auteurs ont
notamment donn\'e l'exemple d'une s\'erie formelle \`a coefficients dans le corps fini $\F_2$,
alg\'ebrique de degr\'e $3$ sur $\F_2(T)$, ayant un d\'eveloppement en fraction continue
avec des quotients partiels qui sont tous des polyn\^{o}mes en $T$ de degr\'e $1$ ou $2$. Dix ans
plus tard, Mills et Robbins \cite{millsrobbins} ont d\'ecrit un algorithme qui leur a
permis de donner le d\'eveloppement explicite en fraction continue pour
la s\'erie formelle cubique de Baum et Sweet. Ces travaux ont mis en
lumi\`ere un sous ensemble de s\'eries formelles alg\'ebriques, obtenues
comme points fixes de la compos\'ee d'une homographie \`a coefficients
entiers (polyn\^{o}mes) avec le morphisme de Frobenius ; ces s\'eries sont alors appel\'ees hyperquadratiques.
Le d\'eveloppement en fraction continue a pu \^{e}tre
donn\'e explicitement (voir \cite{mona2000,millsrobbins,schmidt} pour plus de r\'ef\'erences) pour de nombreux exemples de ces s\'eries formelles.
\par Nous avons observ\'e que pour tout nombre premier $p$ et $r=p^t$,
l'\'equation 
$$TX^{r+1}+X-T=0$$
 a une unique solution dans le corps
$\F_p((T^{-1}))$. Pour $r=p=2$ cette solution est la cubique de Baum
et Sweet. Dans cet article, nous donnons le d\'eveloppement en fraction
continue de cette solution pour $r>2$ (voir la partie \ref{resultats}). On peut par ailleurs remarquer
que si l'on remplace $r$ par $2$ dans les formules obtenues, alors le
d\'eveloppement obtenu est impropre (une sous-suite de quotients
partiels tend vers 0 dans $\F_p((T^{-1}))$). Cependant, cette
expression, lorsqu'elle est tronqu\'ee et rendue propre, donne le d\'eveloppement 
qui a \'et\'e obtenu dans \cite{millsrobbins}.
\par Pour obtenir ce d\'eveloppement nous utilisons une m\'ethode d\'ej\`a
utilis\'ee par Lasjaunias dans \cite{ffa2008}, qui, bien que proche de l'algorithme
de Mills et Robbins, en diff\`ere un peu. Pour illustrer cette m\'ethode,
nous l'appliquons dans un premier temps \`a un autre exemple de s\'erie formelle hyperquadratique, \`a coefficients dans $\F_p$.
 Cet exemple, tr\`es c\'el\`ebre, a \'et\'e introduit par
Mahler \cite{mahler} dans un article fondateur sur l'approximation diophantienne
dans les corps de fonctions.
\par Nous rappelons bri\`evement les notations utilis\'ees. Dans ce texte,
$p$ est un nombre premier,  $\F_p$ d\'esigne le
corps fini \`a $p$ \'el\'ements, $\F_p[T]$, $\F_p(T)$ et $\F_p((T^{-1}))$ sont,
respectivement, l'anneau des polyn\^{o}mes, le corps des fonctions
rationnelles et le corps des s\'eries formelles (en $1/T$) de la
variable $T$ sur $\F_p$. Ainsi
$$\F_p((T^{-1}))=\lbrace 0\rbrace \cup \left\{ \sum_{k\leq k_0}u_kT^k,k_0\in \mathbb Z,
u_k\in \F_p, u_{k_0}\neq 0 \right\}.$$
Ce corps de s\'eries formelles est muni d'une valeur absolue ultram\'etrique d\'efinie par $|\alpha|=|T|^{k_0}$ et $|0|=0$, o\`u $|T|$ est un r\'eel fix\'e strictement sup\'erieur \`a $1$. De plus, il est connu que $\F_p((T^{-1}))$ est le complet\'e de $\F_p(T)$ pour cette valeur absolue.

Dans la suite,  $r$ est une puissance de $p$,  $r=p^t$, avec $t\geq 1$ entier. Le morphisme de Frobenius d\'efini dans $\F_p((T^{-1}))$ est not\'e
$\alpha \mapsto \alpha^r$. Une
s\'erie formelle, $\alpha \in \F_p((T^{-1}))$, est dite hyperquadratique
si l'on a $\alpha =f(\alpha^r)$ o\`u $f$ est une homographie \`a
coefficients dans $\F_p[T]$. 
\par Tout \'el\'ement $\alpha \in \F_p((T^{-1}))$ a un
d\'eveloppement en fraction continue (infini si $\alpha$ n'est pas une fraction rationnelle)
que l'on notera
$$\alpha =[a_0,a_1,a_2,\dots,a_{n},\alpha_{n+1}]$$
o\`u les $a_i \in \F_p[T]$ (avec $\deg(a_i)>0$ pour $i>0$) sont appel\'es
les quotients partiels et les $\alpha_{i}\in \F_p((T^{-1}))$ sont les
quotients complets.

\vskip 1 cm

\section{M\'ethode employ\'ee et exemple de Mahler}

Dans cette partie nous pr\'esentons le raisonnement sur lequel les preuves reposent.
Nous commen\c{c}ons par un lemme \'el\'ementaire concernant les fractions continues. Une courte d\'emonstration en est donn\'ee dans l'article \cite{ffa2008}.

Nous rappelons d\'ej\`a la notation suivante. Soit $P/Q \in \F_p(T)$ tel que $P/Q:=[a_1,a_2,\ldots,a_n]$. Pour tout $x\in \F_p(T)$, nous noterons $$\Big[ [a_1,a_2,\ldots,a_n] ,x \Big]:=\frac{P}{Q}+\frac{1}{x}.$$

\begin{lemme} \label{lemmegen} Soient $a_1,\ldots, a_n, x\in \F_p(T)$. On a la relation suivante :
 \begin{equation}\nonumber \Big[ [a_1,a_2,\ldots,a_n] ,x \Big]=[a_1,a_2,\ldots,a_n,x'],\end{equation}
 avec   \begin{equation} \label{lgen} x'=f_nx+g_n,\end{equation}
 o\`u les $f_n,g_n$ sont des \'el\'ements de $\F_p(a_1,a_2,\ldots,a_n)$ (voir \cite{ffa2008}, page 330).
\end{lemme}

A l'exception du paragraphe \ref{contexte}, nous utilisons ce lemme uniquement dans les cas $n=2$ et $n=3$, qui s'\'enoncent comme suit.

\begin{lemme} \label{lemmepart}Soient $a_1,a_2,a_3,x\in \F_p(T)$. On a les relations suivantes :
 \begin{equation}\nonumber \Big[ [a_1,a_2] ,x \Big]=[a_1,a_2,y],
\text{ o\`u  } y=-a_2^{-2}x-a_2^{-1},
\end{equation}

\begin{equation}\nonumber \Big[ [a_1,a_2,a_3] ,x\Big]=[a_1,a_2, a_3, y], 
\text{ o\`u  } y=(a_2a_3+1)^{-2}x-a_2(a_2a_3+1)^{-1}.
\end{equation}
\end{lemme}

\subsection{Premier exemple}

Nous allons \`a pr\'esent d\'ecrire la suite de quotients partiels d'un ensemble de s\'eries formelles v\'erifiant un certain type d'\'equation. Comme corollaire, nous obtenons le d\'eveloppement en fraction continue de la s\'erie de Mahler :
\begin{equation}\label{mah} \Theta_r=1/T+1/T^{r}+1/T^{r^2}+\cdots+1/T^{r^k}+\cdots \in \F_p((T^{-1})). \end{equation}
 On peut remarquer que $\Theta_r$ est une s\'erie alg\'ebrique de degr\'e $r$ v\'erifiant l'\'equation 
\begin{equation}\label{maheq}  Tz^r-Tz+1=0.\end{equation}
Il convient de noter que le d\'eveloppement en fraction continue de $\Theta_r$, donn\'e dans le corollaire \ref{dfcmahler} est d\'ej\`a connu, m\^{e}me s'il n'a jamais \'et\'e pr\'esent\'e sous cette forme. En effet, il est expos\'e dans \cite{mona2000} (p. 215) et peut aussi \^{e}tre d\'eduit de travaux plus anciens de Shallit sur les fractions continues de certains nombres r\'eels \cite{shallit}.

\begin{theo}\label{mahlergen}Soit $p$ un nombre premier et $r=p^t$, $t\geq 1$, avec $r>2$. Soit $\ell\in \N$, $\ell\geq 1$ et soit $(a_1,a_2,\ldots,a_l)$ un $\ell$-uplet de polyn\^{o}mes dans $F_p[T]$, avec $a_i(T)\in T\F_p[T]$, pour tout $i$ impair, $1\leq i \leq l$. Si $z$ est la fraction continue  $z=[a_1,a_2,\ldots,a_{\ell},z_{\ell+1}]$ v\'erifiant l'\'equation :
\begin{equation}\label{mhgen} z^r=-T^2z_{\ell+1}-T, 
\end{equation}
alors  la suite de quotients partiels de $z$,  $(a_n)_{n\geq l+1} \in (\F_p[T])^{\N}$ est d\'efinie pour $k\geq 0$  par :
$$\begin{array}{ccccccc}a_{\ell+4k+1}&=&\displaystyle-\frac{a_{2k+1}^r}{T^2}, &\;&  a_{\ell+4k+2}&=&-T,\\
 a_{\ell+4k+3}&=&a_{2k+2}^r, &\;& a_{\ell+4k+4}&=&T. \\
\end{array}$$
\end{theo}

\begin{remarque} L'existence de la fraction continue v\'erifiant (\ref{mhgen}) d\'ecoule du th\'eor\`eme 1 de l'article \cite{ffa2008}.
\end{remarque}
Revenons maintenant \`a la s\'erie $\Theta_r$. On pose $y:=1/\Theta_r$ et $y:=[a_1,a_2,\ldots,a_n,\ldots]$. D'apr\`es (\ref{mah}), pour $r>2$, on a
$$\left|T-\frac{1}{\Theta_r}\right|=\frac{1}{|T^{r-1}\Theta_r|}=\frac{1}{|T|^{r-2}}<1,$$
et par cons\'equent $y=T+1/y_2=[T,y_2]$. D'apr\`es (\ref{maheq}) on a 
$$\frac{T}{y^r}=\frac{T}{y}-1=-\frac{1}{Ty_2+1},$$
et donc $y_2$ v\'erifie la relation :
\begin{equation}\label{mahler1} y^r=-T^2y_2-T.\end{equation}

En remarquant que l'\'equation (\ref{mahler1}) est un cas particulier de l'\'equation (\ref{mhgen}), o\`u $\ell=1$ at $a_1=T$, nous en d\'eduisons directement le corollaire suivant.
\begin{corollaire} \label{dfcmahler}On a
  $\Theta_r=[0,a_1,a_2,\dots,a_n,\dots]$ o\`u la suite $(a_i)_{i\geq 1}$ est
  d\'efinie par r\'ecurrence, pour $k\geq 0$, par :
$$\begin{array}{ccccccc}a_{4k+1}&=&T,  &\;& a_{4k+2}&=&-a_{2k+1}^r/T^2,\\
   a_{4k+3}&=&-T,  &\;& a_{4k+4}&=&a_{2k+2}^r. \\
\end{array}$$
\end{corollaire}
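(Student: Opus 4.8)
L'approche consiste \`a reconna\^itre ce corollaire comme le cas particulier $\ell=1$, $a_1=T$ du th\'eor\`eme~\ref{mahlergen}, et \`a en traduire la conclusion. Le point de d\'epart est le calcul pr\'eliminaire d\'ej\`a men\'e ci-dessus, que je reprendrais tel quel~: en \'ecrivant $y:=1/\Theta_r=[a_1,a_2,\dots]$, l'\'egalit\'e~\eqref{mah} et l'hypoth\`ese $r>2$ donnent $|T-1/\Theta_r|<1$, d'o\`u $a_1=T$ et $y=[T,y_2]$~; puis l'\'equation~\eqref{maheq} fournit la relation~\eqref{mahler1}, soit $y^{r}=-T^{2}y_2-T$. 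Or cette relation n'est autre que l'\'equation~\eqref{mhgen} pour $\ell=1$, avec $z=y$, $z_{\ell+1}=y_2$ et le $1$-uplet $(a_1)=(T)$, lequel v\'erifie bien $a_1=T\in T\F_p[T]$. Les hypoth\`eses du th\'eor\`eme~\ref{mahlergen} \'etant satisfaites, ce dernier s'applique et d\'ecrit les quotients partiels $a_n$ de $y$ pour tout $n\ge 2$.

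Les \'etapes suivantes se r\'eduisent \`a une sp\'ecialisation et \`a un renum\'erotage. En posant $\ell=1$ dans la conclusion du th\'eor\`eme~\ref{mahlergen}, on trouve, pour tout $k\ge0$~:
\[
a_{4k+2}=-\frac{a_{2k+1}^{\,r}}{T^{2}},\qquad a_{4k+3}=-T,\qquad a_{4k+4}=a_{2k+2}^{\,r},\qquad a_{4k+5}=T .
\]
La derni\`ere de ces \'egalit\'es se lit $a_{4(k+1)+1}=T$ pour tout $k\ge0$, autrement dit $a_{4k+1}=T$ pour tout $k\ge1$~; jointe \`a la valeur $a_1=T$ d\'ej\`a obtenue, elle donne $a_{4k+1}=T$ pour \emph{tout} $k\ge0$. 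Rassembl\'ees, ces quatre familles sont exactement la r\'ecurrence annonc\'ee, et $\Theta_r=1/y=[0,a_1,a_2,\dots]$ est le d\'eveloppement cherch\'e. J'indiquerais enfin bri\`evement que ce d\'eveloppement est infini, c'est-\`a-dire $\Theta_r\notin\F_p(T)$~: d'apr\`es~\eqref{mah}, le coefficient de $T^{j}$ vaut $1$ exactement pour $j=-r^{k}$, $k\ge0$, et $0$ sinon, suite qui n'est pas ultimement p\'eriodique, alors que tout \'el\'ement de $\F_p(T)$ a une suite de coefficients ultimement p\'eriodique (de mani\`ere \'equivalente, un d\'eveloppement en fraction continue fini).

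La principale subtilit\'e --- assez mineure ici --- concerne la famille $a_{4k+1}=T$~: le th\'eor\`eme~\ref{mahlergen} ne contr\^ole par lui-m\^eme que les $a_n$ pour $n\ge\ell+1=2$, de sorte que le cas $k=0$ (la valeur $a_1=T$) doit \^etre obtenu \`a part, via l'observation pr\'eliminaire $y=[T,y_2]$, les cas $k\ge1$ provenant quant \`a eux de la quatri\`eme famille $a_{\ell+4k'+4}=T$ du th\'eor\`eme apr\`es le d\'ecalage $k'=k-1$. Hormis ce recollement et le renum\'erotage syst\'ematique $n\mapsto n-1$ des quatre formules $4$-p\'eriodiques, il n'y a aucun obstacle~: tout le contenu r\'eside dans le th\'eor\`eme~\ref{mahlergen} lui-m\^eme.
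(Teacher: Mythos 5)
Votre démonstration est correcte et suit exactement la démarche du texte : le calcul préliminaire $y=1/\Theta_r=[T,y_2]$ avec $y^r=-T^2y_2-T$, puis la spécialisation du théorème~\ref{mahlergen} au cas $\ell=1$, $a_1=T$, avec le recollement d'indices donnant $a_{4k+1}=T$ pour tout $k\geq 0$. La vérification explicite de l'hypothèse $a_1\in T\F_p[T]$ et la remarque sur l'irrationalité de $\Theta_r$ sont des précisions bienvenues mais ne changent pas l'argument.
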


\begin{proof}[D\'emonstration du th\'eor\`eme \ref{mahlergen}] Nous partons de la relation :
\begin{equation}\nonumber z^r=-T^2z_{\ell+1}-T,\end{equation}
qui peut \^{e}tre \'ecrite aussi sous la forme $[a_1^r,z_2^r]=-T^2z_{\ell+1}-T$, ou encore :
$$\Big[[-\frac{a_1^r}{T^2},-T],-T^2z_2^r\Big]=z_{\ell+1}.$$
En utilisant le lemme \ref{lemmepart} et en tenant en compte du fait que $a_1$ est divisible par $T$, nous en d\'eduisons les relations suivantes :
$$z_{\ell+1}=[-\frac{a_1^r}{T^2},-T, z'], \text{ avec } z'=z_2^r+T^{-1}.$$
Puisque $|z'|>1$, on en d\'eduit que $z'=z_{l+3}$. On a donc
$$a_{\ell+1}=-\frac{a_1^r}{T^2}, \, a_{\ell+2}=-T \text{ et }z_{l+3}=z_2^r+\frac{1}{T}.$$\\
Nous appliquons de nouveau le m\^{e}me raisonnement 
et nous obtenons : $$z_{l+3}=z_2^r+\frac{1}{T}=[a_2^r+\frac{1}{T},z_3^r]=[a_2^r,T,z''],$$
avec $z''=-T^{-2}z_3^r-T^{-1}$. Puisque $r>2$, on a $|z''|>1$ et alors, par identification, $z''=z_{l+5}$. On a donc
$$a_{\ell+3}=a_2^r,\, a_{\ell+4}=T \text{ et } z_{l+5}=-T^{-2}z_{3}^r-T^{-1}.$$

En r\'esum\'e, 
\begin{equation}\nonumber z_{\ell+1}=[-\frac{a_1^r}{T^2},-T,a_2^r,T, z_{\ell+5}].\end{equation}
Plus g\'en\'eralement, par une simple r\'ecurrence sur $k$, on obtient que :
\begin{equation}\nonumber z_{2k+1}^r=-T^2z_{\ell+4k+1}-T. \end{equation}
Puisque $a_{2k+1}\in T\F_p[T]$, nous obtenons comme pr\'ec\'edemment :
\begin{equation}\nonumber z_{\ell+4k+1}=[-\frac{a_{2k+1}^r}{T^2},-T,a_{2k+2}^r,T, z_{\ell+4k+5}],\end{equation}
ce qui termine la d\'emonstration.
\end{proof}

\subsection{Le contexte g\'en\'eral}\label{contexte}
Dans cette partie nous \'enon\c{c}ons le raisonnement g\'en\'eral et les notations utilis\'ees dans les preuves qui suivent. Ceux-ci restent proches de ceux utilis\'es par Mills et Robbins dans \cite{millsrobbins}.

 Soit $(P,Q,R)\in \F_p[T]^3$ et soient $m,n$ deux entiers strictement positifs, $m<n$. On dit que $z$ satisfait une relation du type $(P,Q,R,m,n)$ si on a :
\begin{equation}\label{pqr} Pz_m^{r}=Qz_n+R.\end{equation}
Dans la suite, nous consid\'erons une s\'erie $z=[a_1,a_2,\ldots]\in \F_p((T^{-1}))$ satisfaisant (\ref{pqr}), avec un triplet $(P,Q,R)$ bien choisi. 
En fait, il est probable que cette relation soit vraie pour presque toutes les s\'eries formelles hyperquadratiques, mais \`a notre connaissance, aucun r\'esultat g\'en\'eral ne le confirme (le lecteur peut consulter \cite{ffa2008}, page 333, pour quelques commentaires \`a ce sujet).

On suppose connus les $n-1$ premiers coefficients partiels : $a_1,a_2,\ldots,a_{n-1}$ (qui peuvent \^{e}tre vus comme une ``donn\'ee de d\'epart'').
La relation (\ref{pqr}) implique :
$$Pa_m^r+\frac{P}{z_{m+1}^r}=Qz_n+R,$$
car, par d\'efinition, $z_m=[a_m, z_{m+1}]$.
Ainsi, nous obtenons :
\begin{equation}\label{eq3} \frac{Pa_m^r-R}{Q}+\frac{P}{Qz_{m+1}^r}=z_n.\end{equation}
Puisque $$\frac{Pa_m^r-R}{Q}\in \F_p(T),$$ cette expression a un d\'eveloppement en fraction continue qui est fini. Il existe ainsi des polyn\^{o}mes $\lambda_1,\lambda_2,\ldots, \lambda_\ell\in \F_p[T]$, que l'on peut calculer, tels que :
$$\frac{Pa_m^r-R}{Q}=[\lambda_1,\lambda_2,\ldots, \lambda_{\ell}].$$ 
La relation (\ref{eq3}) devient :
\begin{equation}\nonumber\Big[[\lambda_1,\lambda_2,\ldots, \lambda_{\ell}], \frac{Qz_{m+1}^r}{P}\Big]=z_n,\end{equation}
et le lemme \ref{lemmegen} implique alors que :
$$z_n=[\lambda_1,\lambda_2,\ldots, \lambda_{\ell}, z'].$$
A ce moment, si $|z'| >1 $, nous pouvons  d\'ej\`a identifier $a_n=\lambda_1,a_{n+1}=\lambda_2, \ldots, a_{n+l-1}=\lambda_l$ et $z_{n+l}=z'$.\\
Par le lemme \ref{lemmegen} on d\'eduit donc :
$$z_{n+\ell}=f_{\ell}\frac{Qz_{m+1}^r}{P}+g_{\ell},$$
o\`u $f_{\ell}$ et $g_{\ell}$ appartiennent \`a $\F_p(T)$ (voir la formule (\ref{lgen})).
Il existe donc trois polyn\^{o}mes $P_1,Q_1,R_1\in \F_p[T]$, qu'on peut d\'eterminer explicitement, tels que :
\begin{equation}\nonumber  P_1z_{m+1}^r= Q_1 z_{n+\ell}+R_1. \end{equation}

En r\'esum\'e, la connaissance de $P,Q,R$ et $a_m$ nous permet de d\'eterminer 
les $\ell$ nouveaux quotients partiels : $a_n, a_{n+1}, \ldots, a_{n+\ell-1}$ et une nouvelle \'equation :
$$P_1z_{m+1}^r=Q_1z_{n+\ell}+R_1.$$
Dans la suite, nous noterons ce raisonnement par :
$$(P,Q,R,m,n:a_m)\rightarrow (P_1,Q_1,R_1,m+1,n+\ell : a_{n},a_{n+1},\ldots,a_{n+\ell-1}).$$
Plus g\'en\'eralement, si $X:=(P,Q,R,m,n)$ et $Y:=(P_{\ell},Q_{\ell},R_{\ell},m+\ell,n+k)$, nous noterons :
\begin{equation}\label{ctx}(X:a_m, a_{m+1},\ldots,a_{m+\ell-1})\rightarrow (Y: a_n,a_{n+1},\ldots, a_{n+k-1})\end{equation}
pour d\'esigner la suite des raisonnements suivants :
\begin{displaymath}
\left\{ \begin{array}{ll}
(X:a_m) &\rightarrow (X_1:a_n,a_{n+1},\ldots, a_{n+k_1-1})\\
(X_1:a_{m+1}) &\rightarrow  (X_2:a_{n+k_1},a_{n+k_1+1}, \ldots a_{n+k_2-1})\\
 &\quad \vdots \\
(X_{\ell-1}:a_{m+\ell-1}) &\rightarrow  (Y:a_{n+k_{\ell-1}},a_{n+k_{\ell-1}+1},\ldots,a_{n+k-1}).
\end{array}  \right.
\end{displaymath}
o\`u $X_i: = (P_i,Q_i,R_i,m+i,n+k_i)$, pour $1\leq i \leq \ell-1$.
La nouvelle \'equation va donc relier  $z_{m+\ell}$ et $z_{n+k}$ de la mani\`ere suivante :
$$P_{\ell}z_{m+\ell}^r=Q_{\ell}z_{n+k}+R_{\ell}.$$
Nous remarquons que, lorsque $l=n-m$, la relation (\ref{ctx}) s'\'ecrit :
$$(P,Q,R,m,n:a_m, a_{m+1},\ldots,a_{n-1})\rightarrow (P_{n-m},Q_{n-m},R_{n-m}: a_n,a_{n+1}\ldots, a_{n+k-1}),$$
ce qui signifie que la connaissance de $a_m,a_{m+1},\ldots,a_{n-1}$ nous permet de calculer les $k$ quotients partiels suivants, \`a savoir $a_n,a_{n+1},\ldots, a_{n+k-1}$. Nous pouvons alors appliquer de nouveau le m\^{e}me raisonnement, \`a une \'equation du type $(P_{n-m},Q_{n-m},R_{n-m},m+n-1,n+k)$ ayant comme ``donn\'ee de d\'epart'' les quotients partiels $a_n,a_{n+1},\ldots, a_{n+k-1}$. L'it\'eration de ce proc\'ed\'e permet ainsi d'obtenir tous les quotients partiels de la s\'erie $z$.

Soient $W,W'$ deux suites finies de polyn\^{o}mes \`a coefficients dans $\F_p$. On note $|W|$ la longueur de $W$, c'est-\`a-dire le nombre de ses termes. Alors la relation (\ref{ctx}) peut \^{e}tre \'ecrite aussi :
$$(P,Q,R,m,n : W)\rightarrow (P_{|W|},Q_{|W|},R_{|W|},m+|W|, n+ |W'|:W').$$
Pour all\'eger les notations, nous nous permettons parfois d'\'ecrire :
$$(P,Q,R,m,n : W)\rightarrow (P_{|W|},Q_{|W|},R_{|W|}:W').$$

\section{G\'en\'eralisation de la cubique de Baum et Sweet}\label{resultats}

Avant d'\'enoncer notre r\'esultat principal, nous allons tout d'abord introduire quelques d\'efinitions.

\begin{defi}  On d\'efinit une suite de polyn\^{o}mes \`a coefficients dans $\F_p$, $(\Gamma_k)_{k\geq 1}$, de la fa\c{c}on suivante.
On pose $\Gamma_1:=-T, T^r, T$. Pour $k >1$, $\Gamma_k$ est d\'efini r\'ecursivement par : 
$$\Gamma_k:=a_1,a_2,\ldots,a_{2^{k+1}-1}\text { et } \Gamma_{k+1}:=b_1,b_2,\ldots,b_{2^{k+2}-1},$$
o\`u $$b_{2i-1}=(-1)^{i+k}T \quad \text{ pour } 1\leq i\leq 2^{k+1}$$
et $$b_{4i}=a_{2i}^r/T^2 \text{ pour } 1\leq i\leq 2^{k}-1, \quad b_{4i-2}=-a_{2i-1}^r \text{ pour } 1\leq i\leq 2^{k}.$$
\end{defi}

\begin{defi} On d\'efinit la suite des polyn\^{o}mes \`a coefficients dans $\F_p$, $(\Lambda_k)_{k\geq 1}$, de la fa\c{c}on suivante.  On pose
  $\Lambda_1:=T+1,T-1$, $\Lambda_2:=T,-T^r+1,-T $. Pour $k\geq 3$, $\Lambda_k$ est d\'efini r\'ecursivement par :
 $$\Lambda_k:=\Lambda_{k-2},-T^{\lambda_{k-1}}, \Gamma_{k-2},$$
 o\`u $(\lambda_k)_{k\geq 1}$ est d\'efini comme suit :
 $$\lambda_1=r,\quad  \lambda_{k+1}=r\lambda_k-2.$$
\end{defi}
Etant donn\'ee une suite $W=a_1,\ldots,a_m$ \`a valeurs dans $\F_p[T]$, on note 
$-W:=-a_1,\ldots,-a_m$ et $\overline{W}:=a_m, a_{m-1}, \ldots, a_1$.
\begin{defi}  On d\'efinit la suite des polyn\^{o}mes \`a coefficients dans $\F_p$, $(\Omega_k)_{k\geq 1}$, de la fa\c{c}on suivante.
On pose $\Omega_1:=-T^{\omega_1}$. Pour $k\geq 2$, $\Omega_k$ est d\'efini r\'ecursivement par :
\begin{equation}\label{ommega}\Omega_k:= \Omega_{k-1}, \Lambda_{k-1}, -T^{\omega_k}, -\overline{\Lambda}_{k-1},  -\overline{\Omega}_{k-1},\end{equation}
o\`u $(\omega_k)_{k\geq 1}$ est d\'efini comme suit :
 $$\omega_1=r-2, \quad \omega_{k+1}=r\omega_k-2.$$
Nous notons $\Omega_{\infty}$ la suite infinie commen\c{c}ant par $\Omega_k$, pour tout $k\geq 1$.
 \end{defi}

\bigskip

Avec les notations ci-dessus, nous pr\'esentons maintenant notre r\'esultat principal.
\begin{theo} \label{bsgen} Soit $p$ un nombre premier et $r=p^t$, o\`u $t\geq 1$, avec $r>2$.  L'\'equation 
\begin{equation} \nonumber TX^{r+1}+X-T=0
\end{equation}
a une unique racine dans le corps $\F_p((T^{-1}))$ dont le d\'eveloppement en fraction continue est :
$$[1,-T-1,\Omega_{\infty}].$$ 
\end{theo}
\bigskip

\subsection{D\'emonstration du th\'eor\`eme \ref{bsgen}}

Tout d'abord, nous montrons que l'\'equation :
 \begin{equation}\label{bsggen} TX^{r+1}+X-T=0
\end{equation}
a une unique solution dans le corps $\F_p((T^{-1}))$. \\
En effet, de (\ref{bsggen}), on d\'eduit que :
\begin{equation}\label{genbs}X=\frac{T}{TX^r+1}.\end{equation}
Nous consid\'erons maintenant l'application $f(X)=T/(TX^r+1)$ d\'efinie sur $\F_p((T^{-1}))$ \`a valeurs dans  $\F_p((T^{-1}))$. Il n'est pas difficile de voir que $f$ est une application strictement contractante (puisque $r>2$) et nous savons que $\F_p((T^{-1}))$ est un espace complet pour la distance ultram\'etrique usuelle. Ainsi, par utilisation du th\'eor\`eme du point fixe, l'\'equation $f(X)=X$ a une unique solution dans $\F_p((T^{-1}))$. Celle-ci est donc l'unique solution de l'\'equation (\ref{bsggen}), que nous noterons dans la suite $BS_r$.\\
Soit $z$ la s\'erie d\'efinie par :
\begin{equation}\label{bsr}BS_r=1+\frac{1}{(-T-1)+1/z}.\end{equation}
D'apr\`es (\ref{genbs}) et (\ref{bsr}) on a :
\begin{equation}\label{bsr2} 
z=\frac{(-T^r+T-1)z^r+1}{T^2z^r},
\end{equation}
ce qui entra\^{i}ne 
$$\left| z-\frac{-T^r+T-1}{T^2}\right|=\frac{1}{|T^2z^r|}< \frac{1}{|T^4|}.$$
Autrement dit $(-T^r+T-1)/T^2$ est une r\'eduite de $z$, donc les 3 premiers quotients partiels de $z$ sont 
$-T^{r-2}, T+1$ et $T-1$. Ainsi 
$z=[-T^{r-2}, T+1,T-1,z_4]$ et d'apr\`es (\ref{bsr2}) on obtient la relation suivante :   
\begin{equation}\nonumber z^r=T^2z_4+(T+1).\end{equation}

Le th\'eor\`eme \ref{bsgen} est alors une cons\'equence directe de proposition ci-dessous.

\begin{proposition}\label{bsgenprop}Soit $z$ la fraction continue infinie $z:=[-T^{r-2},T+1,T-1,z_4]$ satisfaisant :
\begin{equation}\label{eq1} z^r=T^2z_4+(T+1).
\end{equation}
Alors la suite de quotients partiels de $z$ est $\Omega_{\infty}$.
\end{proposition}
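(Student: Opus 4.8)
The plan is to run the machine of Section~\ref{contexte} starting from the equation $z^r = T^2 z_4 + (T+1)$, which in the notation introduced there is a relation of type $(T^2, 1, T+1, 1, 4)$ applied to $z$ with starting data $a_1=-T^{r-2},a_2=T+1,a_3=T-1$. Concretely, each time we feed in a known partial quotient $a_m$ we produce a batch of new partial quotients (obtained by expanding a rational function $\tfrac{Pa_m^r-R}{Q}$ as a finite continued fraction and using Lemme~\ref{lemmegen} to transport the tail) together with a new equation of the same shape relating $z_{m+1}$ and a later complete quotient. The goal is to show that the partial quotients thus generated are exactly the terms of the sequences $\Gamma_k$, $\Lambda_k$, and $\Omega_k$, and that the recursive structure of those three families mirrors exactly the recursion produced by the algorithm.

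The key steps, in order, are as follows. First I would establish the \emph{base cases}: compute by hand the first few iterations $(T^2,1,T+1,1,4:a_1)\rightarrow(\cdots)$, identifying the output partial quotients with $\Lambda_1$, then with $\Lambda_2$, and the accompanying equations; this should also produce the block $\Omega_1=-T^{\omega_1}=-T^{r-2}$ in the right place and pin down the exponents $\omega_1=r-2$, $\lambda_1=r$. Second, I would prove a \emph{transfer lemma for the $\Gamma$ family}: if at some stage the algorithm is fed a block of partial quotients equal to $\Gamma_{k-2}$ attached to an equation of a specific normalized type (the analogue of equation~(\ref{mhgen}) in Th\'eor\`eme~\ref{mahlergen}, with a power of $T$ as leading coefficient), then the output block is exactly $\Gamma_{k-1}$, with the exponent updating by $\lambda_{k}=r\lambda_{k-1}-2$ and $\omega_{k}=r\omega_{k-1}-2$. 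This is essentially a re-run of the computation in the proof of Th\'eor\`eme~\ref{mahlergen} — using Lemme~\ref{lemmepart} in the cases $n=2,3$ and the hypothesis that the odd-indexed entries lie in $T\F_p[T]$ to guarantee the relevant rational functions have length exactly $2$ — so I can cite that argument almost verbatim, checking only that the divisibility-by-$T$ condition propagates (it does: the odd entries of $\Gamma_k$ are all $\pm T$). Third, I would prove the \emph{main recursion for $\Lambda$ and $\Omega$}: feeding the block $\Omega_{k-1}$ (together with its terminal equation) through the algorithm produces, successively, $\Lambda_{k-1}$ (built from $\Lambda_{k-3}$, a single term $-T^{\lambda_{k-2}}$, and $\Gamma_{k-3}$ by the defining recursion, which matches the three sub-applications coming from the three structural pieces), then the central term $-T^{\omega_k}$, then the reversed blocks $-\overline{\Lambda}_{k-1}$ and $-\overline{\Omega}_{k-1}$. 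The appearance of the reversed-and-negated blocks is the reason the statement is about a palindromic-type structure, and it comes from the standard fact that reversing a continued fraction corresponds to transposing the associated matrix; I would make this precise by tracking how the coefficients $f_\ell,g_\ell$ of Lemme~\ref{lemmegen} behave, and by checking that the new equation produced at the end of the $\Omega_{k-1}$-block has exactly the type needed to restart the induction at level $k$.

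Finally, I would assemble these pieces into a single induction on $k$: the induction hypothesis is that after processing the first $|\Omega_{k-1}|$ partial quotients of $z$ one has exactly the block $\Omega_{k-1}$ and an equation of the normalized type with exponent $\omega_k$ (and a parallel statement for the $\Gamma/\Lambda$ auxiliary data), and the inductive step is precisely the main recursion above together with the $\Gamma$-transfer lemma. Since $\Omega_k$ begins with $\Omega_{k-1}$ for every $k$, the blocks are nested and their union is the infinite sequence $\Omega_\infty$, which is the claimed continued fraction expansion of $z$; composing with $z=[-T^{r-2},T+1,T-1,z_4]$ and then with $BS_r=[1,-T-1,z]$ via Lemme~\ref{lemmegen} (or directly from~(\ref{bsr})) gives Th\'eor\`eme~\ref{bsgen}.

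The hard part will be the bookkeeping in the third step: one must track not just the partial quotients but the exact triple $(P_i,Q_i,R_i)$ through each of the many sub-applications inside one $\Omega_{k-1}$-block, verify at every branch point that the relevant rational function $\tfrac{P a_m^r - R}{Q}$ expands to a continued fraction of the predicted length (so that the indices line up and no "degenerate" shortening occurs — this is exactly where $r>2$ is used, as in the $|z''|>1$ checks in the proof of Th\'eor\`eme~\ref{mahlergen}), and confirm that the reversal/negation of blocks emerges correctly rather than some other permutation. Managing the interaction between the two intertwined recursions — the $\omega_k$-indexed one driving $\Omega$ and the $\lambda_k$-indexed one driving $\Lambda$, both obeying the same affine recurrence $x\mapsto rx-2$ but with different seeds — and making sure the exponents never collide or go non-positive, will require care but no new idea beyond what is already in the Mahler example and the general framework of Section~\ref{contexte}.
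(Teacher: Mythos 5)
Your overall route coincides with the paper's: starting from the relation $z^r=T^2z_4+(T+1)$ (in the paper's normalization $Pz_m^r=Qz_n+R$ this is the triple $(1,T^2,T+1)$ with $(m,n)=(1,4)$, i.e.\ type $A_1$ --- you wrote $(T^2,1,T+1)$, swapping $P$ and $Q$), you run the procedure of \ref{contexte}, prove a transfer lemma for the $\Gamma_k$ modeled on the computation of Th\'eor\`eme~\ref{mahlergen}, a recursion for the $\Lambda_k$, and an induction on $k$ showing that feeding the block $\Omega_k$ produces the remaining part of $\Omega_{k+1}$; this is exactly the architecture of Lemmes~\ref{formules}, \ref{Gamma}, \ref{Lambda} and of Proposition~\ref{Omega}, and your remarks on the base case, on the divisibility by $T$ of the entries of $\Gamma_k$, on where $r>2$ is used, and on the recurrences $\omega_{k+1}=r\omega_k-2$, $\lambda_{k+1}=r\lambda_k-2$ are all sound.

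There is, however, one genuine gap, precisely at the point you flag as delicate: the treatment of the reversed blocks. In the inductive step these occur as part of the \emph{input}, not only of the output: $\Omega_k$ ends with $-\overline{\Lambda}_{k-1},-\overline{\Omega}_{k-1}$, and $-\overline{\Lambda}_k$ begins with $-\overline{\Gamma}_{k-2}$, so to push the algorithm through one $\Omega_k$-block you must already know how it acts when fed $-\overline{\Omega}_{k-1}$, $-\overline{\Lambda}_{k-1}$, $-\overline{\Gamma}_{k-2}$. Your induction hypothesis, as stated, covers only the forward blocks, and you propose to obtain the reversed ones from the classical reversal/transposition symmetry of continued fractions, tracked through the $f_\ell,g_\ell$ of Lemme~\ref{lemmegen}. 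That symmetry concerns the convergents of a fixed finite continued fraction; what is needed here is how the left-to-right procedure (whose elementary step $a\mapsto a^r/T^2$, $-a^r$, etc.\ branches on $a\bmod T$ and changes the triple $(P,Q,R)$) acts on the reversed word, and the reversed statements are not mirror images of the forward ones: compare $(A_2:\Lambda_k)\rightarrow(A_6:T^{r-2},\Lambda_{k+1})$ with $(A_5:-\overline{\Lambda}_k)\rightarrow(A_1:{}^{(1)}(-\overline{\Lambda}_{k+1}),T^{r-2},T+1,T-1)$, where the input type, the output type and the side of truncation all differ. The repair is what the paper does: formulate, alongside each forward statement, the corresponding statement with $-\overline{\Gamma}_k$, $-\overline{\Lambda}_k$, $-\overline{\Omega}_k$ as input (the second halves of Lemmes~\ref{Gamma} and \ref{Lambda}, and (\ref{prefix2}) next to (\ref{prefix1})), and carry the two families through one simultaneous induction. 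With that strengthening your outline becomes the paper's proof; without it, the inductive step cannot be completed as planned.
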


\begin{notation} On dit qu'une \'equation est du type $A_1$ si $P=1$, $Q=T^2$ et $R=T+1$ et on note $A_1:=(1,T^2, T+1)$. De la m\^{e}me mani\`ere, nous allons d\'efinir les \'equations des types suivants :
\begin{align*}
A_2:&=(1,T^2,-T+1),\\
A_3:&=(T,-T,-1),\\
A_4:&=(1,T^2,-T),\\
A_5:&=(T,-T,1),\\
A_6:&=(1,T^2,T).
\end{align*}
\end{notation}

Dans ce qui suit, nous utiliserons les notations d\'ecrites dans la partie \ref{contexte}.
\begin{lemme}\label{formules} Soient $m,n \in \mathbb N$ tels que $m<n$ et soit $a\in \F_p[T]$. On a les relations suivantes :
\begin{align*}
(A_1,m,n:a)&\rightarrow \left(A_2,m+1,n+3 :\frac{a^r}{T^2}, -T+1,-T-1\right) \text{ si }a\equiv 0 [T],\\
(A_1,m,n:a)&\rightarrow \left(A_5,m+1,n+2: \frac{(a-1)^r}{T^2}, -T\right) \text{ si }a\equiv 1 [T],\\
(A_2,m,n:a)&\rightarrow \left(A_1,m+1,n+3:\frac{a^r}{T^2}, T+1, T-1\right) \text{ si }a\equiv 0 [T],\\
(A_2,m,n:a)&\rightarrow \left(A_3,m+1,n+2:\frac{(a-1)^r}{T^2}, T \right)\text{ si }a\equiv 1 [T],\\
(A_3,m,n:a)&\rightarrow \left(A_4,m+1,n+2: -a^r, -T\right) \text{ pour tout }a \in \F_p[T],\\
(A_4,m,n:a)&\rightarrow \left(A_3,m+1,n+2:\frac{a^r}{T^2}, T \right) \text{ si }a\equiv 0 [T],\\
(A_4,m,n:a)&\rightarrow \left(A_2,m+1,n+3:\frac{(a+1)^r}{T^2}, T+1,T-1\right) \text{ si }a\equiv -1 [T],\\
(A_5,m,n:a)&\rightarrow \left(A_6,m+1,n+2:-a^r, T\right) \text{ pour tout }a \in \F_p[T],\\
(A_6,m,n:a)&\rightarrow \left(A_5,m+1,n+2:\frac{a^r}{T^2}, -T \right) \text{ si } a\equiv 0 [T],\\
(A_6,m,n:a)&\rightarrow \left(A_2,m+1,n+3:\frac{(a+1)^r}{T^2}, -T+1,-T-1: \right) \text{ si }a\equiv -1 [T].\\
\end{align*}
\end{lemme}

\begin{proof}[D\'emonstration]

Nous allons prouver le premier cas, c'est \`a dire le cas o\`u $z$ satisfait une relation du type $A_1$, avec $a\equiv 0 [T]$ :
 $$z_{m}^r=T^2z_{n}+(T+1).$$
Cette relation s'\'ecrit aussi sous la forme :
 $[a^r, z_{m+1}^r]=T^2z_{n}+(T+1)$ ou bien
$$\frac{a^r}{T^2}-\frac{T+1}{T^2}+\frac{1}{T^2z_{m+1}}=z_{n}.$$
Puisque $a$ est divisible par $T$, nous obtenons : $$\Big[[\frac{a^r}{T^2},-T+1,-T-1],T^2z_{m+1}\Big]=z_{n}.$$ 
En appliquant  le lemme \ref{lemmepart}, on en d\'eduit que 
 $$z_{n}=[\frac{a^r}{T^2},-T+1,-T-1, z_{n+3}]$$ et
$$z_{m+1}^r=T^2z_{n+3}+(-T+1).$$
Ainsi nous obtenons les nouveaux quotients partiels $a_n=\frac{a^r}{T^2},a_{n+1}=-T+1,a_{n+2}=-T-1$ et une \'equation du type $A_2:=(1,T^2,-T+1)$.\\
Les autres cas se d\'eduisent de mani\`ere analogue, en appliquant le raisonnement pr\'ec\'edent et, en particulier, le lemme \ref{lemmepart}.
\end{proof}

\begin{notation} 
Soient $i, j \in \N^*$, $i,j \leq m$ et $W=a_1,a_2,\ldots,a_{m}$. On notera
$$ ^{(i)}W := a_{i+1}, a_{i+2},\ldots, a_m \text{ et }W^{(j)}:=a_1,a_2,\ldots,a_{m-j}.$$ Lorsque $i+j<m-1$, on notera
$$^{(i)}W^{(j)}:=a_{i+1},\ldots,a_{m-j}.$$
\end{notation}

La proposition \ref{bsgenprop} est une cons\'equence imm\'ediate de la proposition \ref{Omega}.

\begin{proposition}\label{Omega} Pour tout  $k\in \N^*$, on a la relation suivante :
\begin{equation}\nonumber (A_1,1,4:\Omega_k) \rightarrow \left(A_2,1+|\Omega_k|, |\Omega_{k+1}|: \,^{(3)} \Omega^{(1)}_{k+1}\right). \end{equation}
\end{proposition}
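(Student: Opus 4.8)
The proof goes by induction on $k$, driven entirely by Lemma~\ref{formules}, which converts the knowledge of the type $A_i$ of the current relation together with the next partial quotient $a$ and its class modulo $T$ into the next one, two or three partial quotients and the type of the next relation. The base case $k=1$ is a single computation: $\Omega_1=-T^{r-2}$ has length one, $a_1=-T^{r-2}\equiv 0\pmod T$, and starting from the type-$A_1$ relation $z^r=T^2z_4+(T+1)$ (so $m=1$, $n=4$), the first line of Lemma~\ref{formules} produces $a_4=(-T^{r-2})^r/T^2=-T^{\omega_2}$, $a_5=-T+1$, $a_6=-T-1$ and a type-$A_2$ relation with $m=2$, $n=7$. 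Since $|\Omega_1|=1$, $|\Omega_2|=7$ and ${}^{(3)}\Omega_2^{(1)}=-T^{\omega_2},-T+1,-T-1$, this is precisely the assertion for $k=1$.

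For the inductive step one uses the recursive shape $\Omega_{k+1}=\Omega_k,\Lambda_k,-T^{\omega_{k+1}},-\overline{\Lambda}_k,-\overline{\Omega}_k$ and feeds $\Omega_{k+1}$ into the relation $(A_1,1,4)$ block by block. Feeding the prefix $\Omega_k$ is exactly the induction hypothesis: it outputs ${}^{(3)}\Omega_{k+1}^{(1)}$ and leaves a type-$A_2$ relation with indices $(1+|\Omega_k|,|\Omega_{k+1}|)$. It then remains to feed, in order, $\Lambda_k$, then $-T^{\omega_{k+1}}$, then $-\overline{\Lambda}_k$, then $-\overline{\Omega}_k$, and to check that the partial quotients thereby produced, concatenated with ${}^{(3)}\Omega_{k+1}^{(1)}$, form ${}^{(3)}\Omega_{k+2}^{(1)}$, while the resulting relation is again of type $A_2$ with indices $(1+|\Omega_{k+1}|,|\Omega_{k+2}|)$. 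This reduces the step to three companion statements, each proved by a nested induction fed only by Lemma~\ref{formules} and by the recursions defining $\Gamma_j$ and $\Lambda_j$. They are: (i) feeding $\Lambda_k$ into a type-$A_2$ relation outputs a block assembled from $\Lambda_{k-1}$, one monomial and $\Gamma_{k-1}$ and returns a relation of controlled type --- here the recursion $\lambda_{k+1}=r\lambda_k-2$ is exactly what makes the step $a\mapsto a^r/T^2$ (the one occurring in the rules $A_2\to A_1$, $A_4\to A_3$, $A_6\to A_5$) carry $-T^{\lambda_j}$ to $-T^{\lambda_{j+1}}$, and $\Gamma_j$ turns out to be the image of a Mahler-type sub-block, exactly as in the proof of Theorem~\ref{mahlergen}; (ii) feeding the single monomial $-T^{\omega_{k+1}}$ carries it to $\pm T^{\omega_{k+2}}$, the exponent moving by $\omega_{k+1}=r\omega_k-2$; (iii) feeding the reversed blocks $-\overline{\Lambda}_k$ and $-\overline{\Omega}_k$ produces $-\overline{\Lambda}_{k+1}$ and $(-\overline{\Omega}_{k+1})^{(1)}$ and lands back on a type-$A_2$ relation. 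Putting (i)--(iii) together gives the statement for $k+1$.

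I expect point (iii) to be the main obstacle. Processing $-\overline{\Lambda}_k$ and $-\overline{\Omega}_k$ amounts to running the transition rules ``backwards'' along these sequences, and one must verify that the mirror-image blocks are regenerated with the correct signs and exponents of $T$, and --- most delicately --- that the relation types agree at the three seams joining the four blocks, so that the last block really brings us back to type $A_2$; it is this seam analysis, rather than any single application of Lemma~\ref{formules}, that is the real content of the proof. What makes such a symmetry argument feasible is the near-palindromic identity $-\overline{\Omega}_{k+1}=\Omega_k,\Lambda_k,T^{\omega_{k+1}},-\overline{\Lambda}_k,-\overline{\Omega}_k$, i.e.\ $-\overline{\Omega}_{k+1}$ differs from $\Omega_{k+1}$ only in the sign of its central monomial. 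With (i)--(iii) in hand the remaining work is bookkeeping: following the classes modulo $T$ (each $T\pm1$ is $\equiv\pm1$, each $\pm T^{e}$ with $e\ge1$ is $\equiv 0$) and keeping track of the identity $(-1)^r=-1$ in $\F_p$, which controls every sign in the formulas.
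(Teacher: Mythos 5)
Your proposal follows essentially the same route as the paper: induction on $k$ via the block decomposition of $\Omega_{k+1}$, with the transition rules of the lemme \ref{formules} driving each step and auxiliary statements for the $\Lambda$- and $\Gamma$-blocks (the paper's lemmes \ref{Lambda} and \ref{Gamma}), the base case being the same single computation. The only structural difference is that the paper makes your point (iii) precise by strengthening the induction hypothesis: the statement for $-\overline{\Omega}_k$ (and for arbitrary positions $m<n$) is proved simultaneously with the one for $\Omega_k$ rather than by a separate nested induction --- exactly the simultaneous treatment that your near-palindromic identity for $-\overline{\Omega}_{k+1}$ suggests.
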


\begin{remarque} Soit $k\in \N$. Nous notons $\ell_k$ la longueur du mot fini $\Omega_k$. La proposition pr\'ec\'edente peut \^{e}tre traduite de la mani\`ere suivante. On suppose connus les $\ell_k$ premiers  quotients de $z$. Alors, si on  applique $\ell_k$ fois le proc\'ed\'e \'enonc\'e dans le paragraphe \ref{contexte} \`a l'\'equation (\ref{eq1}) nous obtenons :
$$z_4=[^{(3)} \Omega^{(1)}_{k+1},z_{\ell_{k+1}}]$$ 
et la nouvelle relation sera :
$$z_{\ell_k+1}=T^2 z_{\ell_{k+1}}+(T+1).$$
\end{remarque}

Pour d\'emontrer la proposition \ref{Omega}, nous allons utiliser les lemmes suivants.

\begin{lemme}\label{Gamma} Soient $k,m,n$ des entiers strictement positifs, $m<n$. On a les relations suivantes :
\begin{itemize}
\item si $k$ est pair, alors :
\begin{align*}
(A_5,m,n&:\Gamma_k)\rightarrow (A_6:\,^{(1)}\Gamma_{k+1}),\\
(A_5,m,n&:-\overline{\Gamma}_k) \rightarrow (A_6:\,^{(1)}(-\overline{\Gamma}_{k+1})) \, ;
\end{align*}
\item si $k$ est impair, alors :
\begin{align*}
(A_3,m,n&:\Gamma_k)\rightarrow (A_4:\,^{(1)}\Gamma_{k+1}),\\
(A_3,m,n&:-\overline{\Gamma}_k) \rightarrow (A_4:\,^{(1)}(-\overline{\Gamma}_{k+1})).
\end{align*}
\end{itemize}
\end{lemme}

\begin{proof}[D\'emonstration]
Tout d'abord, on remarque que, pour tout $k \in \N$, les termes de $\Gamma_k$ sont des polyn\^{o}mes divisibles par $T$.\\ 
Soient $a,b \in T\F_p(T)$. Alors, \`a l'aide du lemme \ref{formules} on peut d\'eduire :
\begin{equation}\nonumber
(A_5,m,n: a,b) \rightarrow (A_5, m+2,n+4 : -a^r,T,b^r/T^2,T).
\end{equation}
Plus g\'en\'eralement, si $a_1,a_2,\ldots,a_{2i} \in T\F_p[T]$ alors :
\begin{equation}\nonumber
(A_5,m,n: a_1,a_2\ldots,a_{2i}) \rightarrow (A_5 : -a_1^r,T,a_2^r/T^2,T,\ldots,-a_{2i-1}^r,T,a_{2i}^r/T^2,T).
\end{equation}
Soit $k\in \N$ et $\Gamma_k:=a_1,a_2,\ldots,a_{2^{k+1}-1}$. La suite $\Gamma_k$ a un nombre impair d'\'el\'ements ; ainsi, nous devons appliquer le lemme \ref{formules} \`a son dernier terme aussi. On obtient :
\begin{equation}\nonumber
(A_5,m,n: \Gamma_k) \rightarrow (A_6 : -a_1^r,T,a_2^r/T^2,T,\ldots,-a_{2i-1}^r,T,a_{2i}^r/T^2,T,-a_{2i+1}^r,T).
\end{equation}
Dans le cas o\`u $k$ est pair, le premier terme de $\Gamma_{k+1}$ est $b_1:=-T$. Donc, par d\'efinition, $$^{(1)}\Gamma_{k+1} =b_2,\ldots,b_{2^{k+2}-1}=-a_1^r,T,\ldots,-a_{2i-1}^r,T,a_{2i}^r/T^2,T,-a_{2i+1}^r,$$ ce qui co\"{i}ncide avec notre r\'esultat.\\
Les autres cas se d\'emontrent de mani\`ere analogue.
\end{proof}

\begin{lemme}\label{Lambda} Soient $k,m,n$ des entiers strictement positifs, $m<n$. On a les relations suivantes :
\begin{itemize}
\item si $k$ est pair, alors : 
\begin{align} 
\label{aa}(A_2,m,n&:\Lambda_k)\rightarrow (A_6: T^{r-2},\Lambda_{k+1}),\\
\label{bb}(A_5,m,n&:-\overline{\Lambda}_k)\rightarrow (A_1:\,^{(1)}(-\overline{\Lambda}_{k+1}),T^{r-2},T+1,T-1).
\end{align}

\item si  $k$ est impair, alors : 
\begin{align}
\label{cc}(A_2,m,n&:\Lambda_k)\rightarrow (A_4:T^{r-2},\Lambda_{k+1}),\\
\label{dd}(A_3,m,n&:-\overline{\Lambda}_k)\rightarrow (A_1:\,^{(1)}(-\overline{\Lambda}_{k+1}),T^{r-2},T+1,T-1).
 \end{align}
\end{itemize}
\end{lemme}

\begin{proof}[D\'emonstration]
 Nous allons d\'emontrer ici la relation (\ref{aa}). Les relations (\ref{bb}), (\ref{cc}) et (\ref{dd}) peuvent \^{e}tre d\'emontr\'ees de mani\`ere analogue.
 On raisonne par r\'ecurrence sur $k$.\\
Nous commen\c{c}ons par le cas o\`u $k=2$. Par d\'efinition : $$\Lambda_2=T,-T^r+1,-T$$ et $$\Lambda_3=T+1,T-1,-T^{r^2-2}, -T, T^r, T.$$
En appliquant les formules du lemme \ref{formules}, on a :
\begin{align*}
(A_2,m,n: T)&\rightarrow (A_1,m+1,n+3 :T^{r-2},T+1,T-1)\\
(A_1,m+1,n+3:-T^r+1)&\rightarrow (A_5,m+2,n+5:-T^{r^2-2},-T)\\
(A_5,m+2,n+5:-T)&\rightarrow ( A_6:-T^r,T)
\end{align*}
 Par cons\'equent, (\ref{aa}) est prouv\'e pour $k=2$.\\
 Nous supposons \`a pr\'esent que $k$ est un entier pair, $k> 2$, et que la relation (\ref{aa}) est vraie pour $k-2$. Nous allons la montrer maintenant pour $k$.\\
 Par d\'efinition, $$\Lambda_k= \Lambda_{k-2}, -T^{\lambda_{k-1}}, \Gamma_{k-2}.$$
 Par l'hypoth\`ese de r\'ecurrence,
 $$(A_2,m,n:\Lambda_{k-2})\rightarrow (A_6,m+|\Lambda_{k-2}|,n+|\Lambda_{k-1}|+1 :T^{r-2},\Lambda_{k-1}).$$
 En appliquant les lemmes \ref{formules} et \ref{Gamma} nous avons aussi :
\begin{align*}
(A_6,m+|\Lambda_{k-2}|,n+|\Lambda_{k-1}|+1:  -T^{\lambda_{k-1}})&\rightarrow (A_5:-T^{\lambda_{k}},-T)\\
(A_5,m+|\Lambda_{k-2}|+1,n+|\Lambda_{k-1}|+3:\Gamma_{k-2})&\rightarrow (A_6:\,^{(1)}\Gamma_{k-1}).
\end{align*}
En r\'eunissant tous ces relations et en tenant en compte que, pour tous les $k$ pairs, $\Gamma_{k-1}$ commence par $-T$, on obtient le r\'esultat.
\end{proof}

\bigskip

\begin{proof}[D\'emonstration de la proposition \ref{Omega}]

 Nous allons prouver par r\'ecurrence sur $k$ que, pour tous $m,n$ tels que $m<n$, on a les relations suivantes : 
\begin{equation}\label{prefix1} (A_1,m,n:\Omega_k) \rightarrow \left( A_2:\,^{(3)} \Omega^{(1)}_{k+1}\right), \end{equation}
\begin{equation}\label{prefix2} (A_1,m,n:-\overline{\Omega}_k) \rightarrow \left(A_2:\,^{(3)} (-\overline{\Omega}_{k+1})^{(1)}\right). \end{equation}
Lorsque $m=1,n=4$, la relation (\ref{prefix1}) implique la proposition \ref{Omega}.
On commence par le cas o\`u $k=1$. On a :
\begin{align*} \Omega_1&=-T^{r-2}, -\overline{\Omega_1}=T^{r-2},\\
\Omega_2&=-T^{r-2},T+1,T-1,-T^{\omega_2},-T+1,-T-1,T^{r-2},\\
-\overline{\Omega}_2&=-T^{r-2},T+1,T-1,T^{\omega_2},-T+1,-T-1,T^{r-2}.
\end{align*}
D'autre part, le lemme \ref{formules} implique que pour tous $m,n$, $m<n$ :
\begin{align*}
(A_1,m,n&:\pm T^{r-2})\rightarrow(A_2:\pm T^{r(r-2)-2},-T+1,-T-1),
\end{align*}
et puisque $\omega_2=r(r-2)-2$ les relations (\ref{prefix1}) et (\ref{prefix2}) sont \'etablies pour $k=1$.\\
Soit $k > 1$. Nous supposons maintenant que (\ref{prefix1}) et (\ref{prefix2}) sont vraies pour $k-1$ et tous $m,n$, $m<n$.
A cause du lemme \ref{Lambda}, nous devons distinguer deux cas : le cas o\`u $k$ est pair et le cas o\`u $k$ est impair. Nous allons supposer maintenant que $k$ est pair.\\
Fixons $m,n \in \N$, $m<n$.

D'apr\`es (\ref{ommega}) on peut \'ecrire les deux relations suivantes : 
\begin{align*}
\Omega_k&= \Omega_{k-1}, \Lambda_{k-1},  -T^{\omega_k},-\overline{\Lambda}_{k-1}, -\overline{\Omega}_{k-1},\\
-\overline{\Omega}_{k}&=\Omega_{k-1}, \Lambda_{k-1}, T^{\omega_k}, -\overline{\Lambda}_{k-1},-\overline{\Omega}_{k-1},
\end{align*}
afin d'appliquer notre algorithme \`a chaque sous-suite qui appara\^{i}t dans les expressions de $\Omega_k$ et $-\overline{\Omega}_k$.\\
Soient $1 \leq i,i' \leq 6$ et $W,W'$ des suites finies de polyn\^{o}mes sur $\F_p$. Pour all\'eger l'\'ecriture, nous nous permettons d'utiliser la notation :
$$(A_i: W) \rightarrow (A_{i'}:W'),$$
les indices $m$ et $n$ \'etant sous-entendus.\\
Par l'hypoth\`ese de r\'ecurrence, on a :
\begin{align*}
(A_1&:\Omega_{k-1}) \rightarrow ( A_2:\,^{(3)}\Omega^{(1)}_{k}),\\
(A_1&:-\overline{\Omega}_{k-1}) \rightarrow \left(A_2:\,^{(3)} (-\overline{\Omega}_{k})^{(1)}\right).
\end{align*}
Les lemmes \ref{Lambda} et \ref{formules} impliquent :
\begin{align*}
(A_2&: \Lambda_{k-1})\rightarrow (A_4:T^{r-2},\Lambda_k),\\
(A_4& : \pm T^{\omega_{k}})\rightarrow (A_3:\pm T^{\omega_{k+1}},T),\\
(A_3&:-\overline{\Lambda}_{k-1})\rightarrow (A_1:\,^{(1)}(-\overline{\Lambda}_{k}),T^{r-2},T+1,T-1).
\end{align*}\\
En r\'esum\'e, on a :
\begin{align*}(A_1& : \Omega_{k}) \rightarrow (A_2:\, ^{(3)}\Omega^{(1)}_{k},T^{r-2}, \Lambda_k,  T^{\omega_{k+1}},T, \,^{(1)}(-\overline{\Lambda}_{k}),T^{r-2},T+1,T-1,\,^{(3)}(-\overline{\Omega}_{k})^{(1)}),\\
(A_1 &:\overline{\Omega}_{k}) \rightarrow (A_2: \,^{(3)}\Omega^{(1)}_{k},T^{r-2}, \Lambda_k,  -T^{\omega_{k+1}},T, \,^{(1)}(-\overline{\Lambda}_{k}),T^{r-2},T+1,T-1,\,^{(3)}(-\overline{\Omega}_{k})^{(1)}).
\end{align*}\\
Nous remarquons que pour tous les $k>1$, $\Omega_k$ commence par $-T^{r-2},T+1,T-1$ et il se termine par $-T+1,-T-1,T^{r-2}$. De m\^{e}me,  $-\overline{\Lambda}_k$ commence par $T$ (puisque $\Gamma_{k-2}$ se termine par $-T$). En combinant les relations pr\'ec\'edentes, nous en d\'eduisons  (\ref{prefix1}) et (\ref{prefix2}).

Le cas o\`u $k$ est impair se traite de mani\`ere analogue.
\end{proof}

\vspace{15mm}

Nous avons observ\'e que tout le raisonnement ci-dessus pour obtenir le d\'eveloppement en fraction continue de $BS_r$ est bas\'e sur le fait que le premier des 3 quotients partiels de d\'epart $(-T^{r-2},T+1,T-1)$ est divisible par $T$. Ainsi, nous pouvons obtenir un r\'esultat plus g\'en\'eral en rempla\c{c}ant $-T^{r-2}$ par un polyn\^{o}me $P$ arbitraire, divisible par $T$. En utilisant les m\^{e}mes notations qu'au d\'ebut de ce paragraphe, nous avons le th\'eor\`eme ci-dessous dont la preuve s'obtient comme pr\'ec\'edemment.

\begin{theo}
 Soit $P \in T\F_p[T]$. On d\'efinit la suite des polyn\^{o}mes \`a coefficients dans $\F_p$, $(\Omega_k(P))_{k\geq 0}$, de la fa\c{c}on suivante.

On pose $\Omega_1(P):=P$. Pour $k\geq 2$, $\Omega_k(P)$ est d\'efini r\'ecursivement par :
$$\Omega_k(P):= \Omega_{k-1}(P), \Lambda_{k-1}, T^{\omega_{k-1}}(P/T)^{r^{k-1}}, -\overline{\Lambda}_{k-1},  -\overline{\Omega}_{k-1}.$$

Si $z$ est la fraction continue infinie $z:=[P,T+1,T-1,z_4]$ satisfaisant :
\begin{equation}\label{genbsr} z^r=T^2z_4+(T+1),
\end{equation}
alors la suite de quotients partiels de $z$ est $\Omega_{\infty}(P)$, o\`u  $\Omega_{\infty}(P)$ est la suite infinie commen\c{c}ant par $\Omega_k(P)$, pour tout $k\geq 1$.
\end{theo}

\bigskip
\begin{remarque}  L'existence de la fraction continue $z$ v\'erifiant  (\ref{genbsr})  d\'ecoule aussi du th\'eor\`eme 1 de l'article \cite{ffa2008}. De plus, il est facile de voir que $z$ satisfait l'\'equation alg\`ebrique :
$$T^2z^{r+1}=(PT^2+T-1)z^r+1.$$
\end{remarque}

\section*{Remerciements} 
Je tiens \`a remercier tout particuli\`erement Alain Lasjaunias, qui m'a indiqu\'e ce sujet, pour les nombreuses discussions que nous avons eues et pour ses commentaires fort utiles au cours de mon travail. Je remercie \'egalement mon directeur de th\`ese, Boris Adamczewski, pour ses remarques judicieuses qui m'ont beaucoup aid\'e dans la r\'{e}daction du
pr\'esent travail.

\end{document}